\theoremstyle{definition}
\newtheorem{thm}{Theorem}[section]
\newtheorem{dfn}[thm]{Definition}
\newtheorem{lem}[thm]{Lemma}
\newtheorem{cor}[thm]{Corollary}
\newcommand{\R}{\mathbb{R}}
\newcommand{\Z}{\mathbb{Z}}
\newcommand{\id}{\operatorname{id}}
\newcommand{\ui}{[0,1]}
\numberwithin{equation}{section}
\newcommand{%
    
    \import{./inkscape/}{}
}[1]{%
    
    \import{./inkscape/}{#1}
}
\renewenvironment{proof}[1][\proofname]{\par
  \pushQED{\qed}%
  \normalfont \topsep6\p@\@plus6\p@\relax
  \trivlist
  \item\relax
  {\bfseries
  #1\@addpunct{.}}\hspace\labelsep\ignorespaces
}{%
  \popQED\endtrivlist\@endpefalse
}
\newcommand{\card}[1]{|#1|}
\newcommand{\concsp}[1]{\mathscr{C}(#1)}
\newcommand{\crit}[2]{C_{#1}(#2)}
\newcommand{\phimap}[2]{\varphi_{#1}([#2])}
\newcommand{\Phimap}[1]{\Phi([#1])}
\newcommand{\floor}[1]{\left\lfloor #1 \right\rfloor}
\title{Concordance of Morse functions on manifolds}
\author{Ryosuke Ota}
\address{Joint Graduate School of Mathematics for Innovation, Kyushu University, 744 Motooka, Nishi-ku, Fukuoka 819-0395, Japan}
\email{ota.ryosuke.509@s.kyushu-u.ac.jp}
\subjclass[2020]{57R45}
\begin{document}

\begin{abstract}
    In this paper, the concordance of Morse functions is defined, and a necessary and sufficient condition for given two Morse functions to be concordant is presented and is compared with the cobordism criterion. 
    Cobordism of Morse functions on smooth closed manifolds is an equivalence relation defined by using cobordisms of manifolds and fold maps. 
    Given two Morse functions, it is important to decide whether they are cobordant or not, and this problem was first solved for surfaces and then for manifolds of general dimensions by Ikegami–Saeki, Kalmár, and Ikegami. 
    On the other hand, for Morse functions on the same manifold, we can consider a stronger equivalence relation called concordance. 
\end{abstract}

\maketitle

\section{Introduction}

The oriented cobordism group of Morse functions is introduced by Ikegami--Saeki~\cite{Ikegami--Saeki}. 
They computed it for orientable surfaces. 
Afterward, Kalm\'ar~\cite{kalmar2005cobordism} studied for unorientable surfaces and  Ikegami~\cite{Ikegami} determined the $n$-dimensional oriented and non-oriented cobordism group for general $n$. 
In the present paper, we introduce the concordance of Morse functions and determine the concordance set. 
In particular, we can determine whether two Morse functions are concordant or not by similar invariants used in the cobordism case. 
Hence, we can compare two equivalence relations, concordance and cobordism through those invariants. 

The cobordism theory of smooth maps is begun by Thom~\cite{thom}. 
He proved that the cobordism group of embeddings is isomorphic to a homotopy group of some space by using the Pontrjagin--Thom construction. 
Then Wells~\cite{wells} used a similar method to study the cobordism group of immersions. 
Rim\'nyi--Sz\H{u}cs~\cite{Rimanyi--Szucs} generalized these results to the cobordism group of maps with given singularities. 
The three above dealt with the cases of non-negative codimensions and cobordism groups were determined by using the method of algebraic topology. 
On the other hand,  Ikegami--Saeki, Kalm\'ar, and Ikegami studied the cobordism group of Morse functions, which is the case of negative codimension, by using geometric methods: Stein factorizations (Reeb graphs) and the elimination of cusps by Levine~\cite{levine1965elimination}. 

The concordance originally appeared in knot theory and is a vital equivalence relation. 
We introduce a similar notion to Morse functions. 
This is the first example where a concordance-like equivalence relation is introduced to smooth maps of negative codimensions. 
To be more specific, for a given closed connected $n$-dimensional manifold $M$, the concordance is an equivalence relation on the set of the Morse functions on $M$. 
Let $\concsp{M}$ be the corresponding quotient set. 
Then the main theorem of the paper is the following, which determines the set $\concsp{M}$. 
\begin{thm}
    Let $M$ be a closed connected $n$-dimensional manifold. 
    \begin{enumerate}
        \item 
        When $n$ is even, the set $\concsp{M}$ is in one-to-one correspondence with $\Z^{\floor{n/2}}$. 
        \item 
        When $n$ is odd, the set $\concsp{M}$ is in one-to-one correspondence with $\Z^{\floor{n/2}}\oplus \Z_2$.
    \end{enumerate}
\end{thm}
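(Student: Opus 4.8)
The plan is to exhibit a complete concordance invariant $\Phi$ and show it realises the stated bijection. To each Morse function $f$ on $M$ I attach a tuple of \emph{signed} counts of its critical points, grouped by absolute index $\{\lambda,n-\lambda\}$ with the definite folds (indices $0$ and $n$) discarded; this accounts for the $\floor{n/2}$ integer coordinates, and for odd $n$ I add one extra $\Z_2$-coordinate recording a parity that no integral move can alter. I would then prove three things: (i) $\Phi$ is invariant under concordance, so it descends to a map $\concsp{M}\to\Z^{\floor{n/2}}$ (resp. $\Z^{\floor{n/2}}\oplus\Z_2$); (ii) every value in the target is realised by some Morse function on the \emph{fixed} manifold $M$; and (iii) $\Phimap{f_0}=\Phimap{f_1}$ forces $f_0$ and $f_1$ to be concordant. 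Steps (i) and (ii) give a well-defined surjection, (iii) is injectivity, and together they prove both (1) and (2).

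\textbf{Invariance via the singular set.} The sign attached to a critical point is the crux of the definition. Given a concordance $F\colon M\times\ui\to\rt{\ui}$ — a fold map restricting to $f_0$ and $f_1$ on the two ends — its singular set $\sing{}{F}$ is a properly embedded $1$-manifold with $\partial\sing{}{F}=\crit{}{f_0}\sqcup\crit{}{f_1}$, and since $F$ has no cusps the fold index is constant on each component. Coorienting by the $\R$-direction of the target, each component gets a preferred orientation, and at every boundary critical point I record the sign of the variation of the time coordinate along this cooriented arc. The key computation is that a closed component contributes nothing; an arc joining the two ends carries \emph{equal} signs at its endpoints, matching an index-$\lambda$ point of $f_0$ with one of $f_1$; and a ``cap'' with both endpoints on the same end turns around in time exactly once, so its two endpoints carry \emph{opposite} signs. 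Hence the signed count of index-$\lambda$ critical points is unchanged, and (using the turn that exchanges $\lambda$ with $n-\lambda$, together with the relation tying the alternating signed total to $\chi(M)$) the invariants collapse to the $\floor{n/2}$ integers claimed. This yields well-definedness of $\Phi$.

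\textbf{Surjectivity.} Starting from any Morse function on $M$ (one exists since $M$ is closed), I would adjust each coordinate of $\Phi$ independently by inserting, inside a coordinate chart, a cancelling pair of critical points of adjacent indices modelled on a standard function on $S^n$; performing this connected-sum modification does not change the diffeomorphism type of $M$, and the choice of local model fixes the two signs. Since such insertions move $(\sigma_\lambda,\sigma_{\lambda+1})$ by prescribed amounts, a suitable combination reaches any integer vector compatible with the Euler/parity constraint — and this constraint is precisely what is absorbed by passing from the full list of indices to $\Z^{\floor{n/2}}$ (even $n$) or $\Z^{\floor{n/2}}\oplus\Z_2$ (odd $n$, where $\chi(M)=0$ frees the integral part but leaves one residual parity). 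This gives surjectivity.

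\textbf{Injectivity and the main obstacle.} For injectivity I would use the invariants to guide a sequence of concordances cancelling all cancellable critical points within each $f_i$: whenever two index-$\lambda$ points carry opposite signs, I realise a cap joining them as the singular set of a fold map on $M\times\ui$, reducing each $f_i$ to a normal form with exactly $|\sigma_\lambda|$ critical points of each index; equal invariants then give matching normal forms, which I connect by a product-like fold cobordism after an ambient isotopy of $M$ carried over $M\times\ui$. The hard part is precisely this realisation \emph{on the fixed product} $M\times\ui$, rather than on some auxiliary cobordism $W$ as in the cobordism theory: one must build the prescribed $1$-dimensional singular set with its index and coorientation data as the actual fold locus of a genuine fold map whose source is $M\times\ui$, controlling the topology of the fibres (level sets) along the way so that no change of the underlying manifold is forced, and invoking the elimination of cusps of Levine to keep the index constant along each arc. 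This is where concordance genuinely departs from cobordism and where the bulk of the technical work — the relative realisation lemmas for fold maps on a product — will be concentrated.
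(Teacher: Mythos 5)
Your invariants, their invariance, and the surjectivity step are broadly in the spirit of the actual proof (the integer invariants are the differences $\crit{\lambda}{f}-\crit{n-\lambda}{f}$, which are already cobordism invariants, and surjectivity is obtained by inserting cancelling pairs of critical points of adjacent indices), although note that the discarded pair is the middle one $\{\lfloor n/2\rfloor,\lceil n/2\rceil\}$ rather than $\{0,n\}$: the difference $\crit{n}{f}-\crit{0}{f}$ is a genuine coordinate of $\Phi$, while for odd $n=2k+1$ the difference $\crit{k+1}{f}-\crit{k}{f}$ is \emph{not} concordance invariant. The serious problem is your injectivity argument, which rests on a step that is impossible. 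You propose to reduce each $f_i$ by concordances to a ``normal form with exactly $|\sigma_\lambda|$ critical points of each index'' and then to join the two normal forms by a product-like fold map after an ambient isotopy. The Morse inequalities forbid such a normal form on any manifold with nontrivial homology: every Morse function on $M$ satisfies $\crit{\lambda}{f}\ge b_\lambda(M)$, so on $M=T^3$, say, every Morse function has at least three critical points of index $1$ even though the corresponding signed count may vanish. The normal form you aim for simply does not exist as a Morse function on the fixed manifold $M$, so no concordance can reach it. Moreover, even for two Morse functions with identical critical point counts, producing a ``product-like fold cobordism after an ambient isotopy'' is a Cerf-theoretic statement much stronger than concordance; the whole point of concordance is that the fold map on $M\times\ui$ need not be a path of Morse functions. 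The ``relative realisation lemmas for fold maps on a product'' to which you defer the technical work are precisely the content that is missing, and they are not the right tool.

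The workable route to injectivity avoids realisation entirely. Take an \emph{arbitrary} generic map $F\colon M\times\ui\to\rt{\ui}$ restricting to $f_0\times\id$ and $f_1\times\id$ near the two ends (this exists by approximating any smooth extension by a map with only fold and cusp singularities), and then apply Levine's elimination of cusps. All cusps of absolute index greater than $(n-1)/2$ can always be removed; for $n$ even this already yields a fold map, hence a concordance, with no further hypothesis. For $n=2k+1$ the remaining cusps all have absolute index $k$, any two of them form a matching pair, and the sole obstruction is the parity of their number. That parity is computed by a counting argument over the components of $S(F)$, using the auxiliary Morse function $\operatorname{pr}_2\circ F$ on $M\times\ui$ and the vanishing of $\chi(M\times\ui,M\times\{0\})$, and it equals $\sigma(f_0)+\sigma(f_1)\bmod 2$, where $\sigma(f)=\sum_{\lambda=0}^{k}\crit{\lambda}{f}\bmod 2$ is the extra $\Z_2$-invariant. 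You never define this invariant nor prove its concordance invariance; doing so, and identifying it with the cusp-parity obstruction, is the actual content of the odd-dimensional case. Without it, both your injectivity argument and your description of the $\Z_2$ factor remain gaps.
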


The paper is organized as follows. 
In \S 2, we recall the cobordism of Morse functions and introduce the concordance of them. We also outline the method of eliminating cusps by Levine. 
It is used for concordance as well as cobordism. 
In \S 3 we prove the main theorem of the paper and compare it with the result for cobordism. 

Throughout the paper, all manifolds and maps between them are smooth of class $C^\infty$. 
For a smooth map $f$, the singular point set of $f$ is denoted by $S(f)$, and for a topological space $X$, the number of the connected components of $X$ is denoted by $\card{X}$. 

The author extends his sincere thanks to Professor Osamu Saeki for his guidance and support. 
Ryosuke Ota is supported by WISE program (MEXT) at Kyushu University. 
This work was supported by the Grant-in-Aid for Scientific Research (S), JSPS KAKENHI Grant Number 23H05437.

\medskip

\section{Preliminaries}

Let $f: M\to \R$ be a smooth real-valued function on a manifold $M$. 
The function $f$ is called a \textit{Morse function} if all critical points of them are non-degenerate, that is, $f$ has non-singular Hessian at each critical point. 

We recall two types of singularities of smooth maps into the plane. 

\begin{dfn}
    Let $F: X\to\R^2$ be a smooth map of an $m$-dimensional manifold $X$ of dimension $m\ge 2$. 
    A singular point $p\in X$ of $F$ is said to be a \textit{fold point} if there is a coordinates $(u, x_1, \dots, x_{m-1})$ centered at $p$ and $(y_1, y_2)$ at $F(p)$ such that $F$ is locally represented by 
    \begin{equation}
        \left\{ \,
        \begin{aligned}
            y_1\circ F &= u, \\
            y_2\circ F &= -x_1^2 - \dots -x_\lambda^2 + x_{\lambda+1}^2 + \dots + x_{m-1}^2
        \end{aligned}
        \right.
    \end{equation}
    for some $0\le \lambda \le m-1$. 
    We call $\max\{\lambda, m-1-\lambda\}$ the \textit{absolute index} of the fold point $p$. 

     We call $p$ a \textit{cusp point} if there is a coordinates $(u_1, u_2, x_1, \dots, x_{m-2})$ centered at $p$ and $(y_1, y_2)$ at $F(p)$ such that $F$ is locally represented by 
    \begin{equation}
        \left\{ \,
        \begin{aligned}
            y_1\circ F &= u_1, \\
            y_2\circ F &= u_1 + u_1u_2^3 -x_1^2 - \dots -x_\lambda^2 + x_{\lambda+1}^2 + \dots + x_{m-2}^2
        \end{aligned}
        \right.
    \end{equation}
    for some $0\le \lambda \le m-2$. 
    We call $\max\{\lambda, m-2-\lambda\}$ the \textit{absolute index} of the cusp point $p$. 

    The absolute index of a fold point or cusp point $p$ is independent of coordinates of $p$ and $F(p)$. 

\end{dfn}

We say that $F$ is \textit{generic} if $F$ has only fold points and cusp points as its singular points. 
In particular, $F$ is a \textit{fold map} if $F$ has only fold points as its singular points. 
If $F$ is generic, the singular set $S(F)$ is a 1-dimensional submanifold of $X$, and particularly, the set of the cusp points is a discrete subset. 
It is well known that every smooth map of $X$ into the plane can be approximated by a generic map. 
By the local form at a fold point, the absolute index of fold points is constant on each component of $S(F)\setminus \{\text{the cusp points}\}$. Hence indices can be defined for each component of it. 
The absolute indices adjacent to a cusp point of absolute index $i$ are as in Fig.~\ref{fig: non minimum absolute index of cusp} (resp. Fig.~\ref{fig: minimum absolute index of cusp}) if $i> (n-1)/2$ (resp. $i=(n-1)/2$). 
Both figures show the image of a neighborhood of a cusp point by $F$ and each integer represents the absolute index of a fold point or a cusp point. 

\begin{figure}[b]
    \centering
    \includegraphics[width = 0.25\columnwidth]{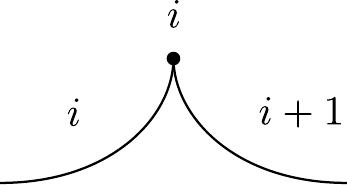}
    \caption{absolute indices of a cusp point and adjacent fold points ($i> (n-1)/2$)}
    \label{fig: non minimum absolute index of cusp}
\end{figure}

\begin{figure}[t]
    \centering
    \includegraphics[width = 0.25\columnwidth]{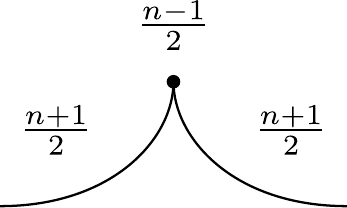}
    \caption{absolute indices of a cusp point and adjacent fold points ($i = (n-1)/2$)}
    \label{fig: minimum absolute index of cusp}
\end{figure}

If $X$ is connected and $m$ is greater than 2, there is a method to eliminate some pairs of cusp points from a given generic map $F$ \cite{levine1965elimination}. 
Here, we outline the method. 
Let $p_1$ and $p_2$ be two cusp points of $F$. 
If $\{p_1, p_2\}$ is an appropriate pair called ``matching pair", we can eliminate two cusp points $p_1$ and $p_2$ by a homotopy of $F$ supported in a small neighborhood of $\lambda (\ui)$, where $\lambda: \ui\to X$ is some arc such that $\lambda (0) = p_1$, $\lambda (1) = p_2$, and $\lambda (\ui)\cap S(F)$. 

We recall the notion of oriented and unoriented cobordism of Morse functions \cite{Ikegami}. 

\begin{dfn}
    Let $M_0$ and $M_1$ be two closed oriented manifolds of the same dimension $n$. 
    Two Morse function $f_0: M_0\to\R$ and $f_1: M_1\to\R$ are said to be \textit{oriented cobordant} if there is a $(n+1)$-dimensional oriented manifold $X$ and a fold map $F: X\to \R\times\ui$ such that
    \begin{enumerate}
        \item 
        $X$ is a oriented cobordism between $M_0$ and $M_1$, that is, $\partial X = M_0\sqcup (-M_1)$, where $-M_1$ is the manifold $M_1$ with the opposite orientation, and
        
        \item 
        for some $\varepsilon >0$, we have
        \begin{align}
            F|_{M_0\times [0,\varepsilon)} &= f_0\times \id_{[0, \varepsilon)}, ~~\text{and}\\
            F|_{M_1\times (1-\varepsilon, 1]} &= f_1\times \id_{(1-\varepsilon, 1]}, 
        \end{align}
        where collar neighborhoods of $M_0$ and $M_1$ in $X$ are identified with $M_0\times [0,\varepsilon)$ and $M_1\times (1-\varepsilon, 1]$, respectively. 
    \end{enumerate}
\end{dfn}
By ignoring the orientations, we can obtain the definition of \textit{unoriented cobordism} of Morse functions. 
The oriented cobordism is an equivalence relation on the set of Morse functions on oriented $n$-manifolds. Then the quotient set has a group structure and is called the \textit{$n$-dimensional oriented cobordism group of Morse functions}, where the addition is defined by the disjoint union. 
In a similar way, we can define the \textit{$n$-dimensional unoriented cobordism group of Morse functions}. 

We introduce concordance, which is another equivalence relation of Morse functions. 

\begin{dfn}
    Let $f_0$, $f_1: M\to \R$ be two Morse functions on the same manifold $M$. 
    Then $f_0$ and $f_1$ is said to be \textit{concordant} if there is a fold map $F: M\times\ui\to \R\times\ui$ such that for some $\varepsilon >0$, 
    \begin{align}
        F|_{M_0\times [0,\varepsilon)} &= f_0\times \id_{[0, \varepsilon)}, ~~\text{and}\\
        F|_{M_1\times (1-\varepsilon, 1]} &= f_1\times \id_{(1-\varepsilon, 1]}.
    \end{align}
\end{dfn}

Given a manifold $M$, the concordance is an equivalence relation on the set of the Morse functions on $M$. 
Let $\concsp{M}$ denote the corresponding quotient set. 
Replacing the cylinder $M\times\ui$ by a cobordism between $M$ and itself in the definition above, we can obtain the notion of cobordism of Morse functions on the same manifold \cite{Ikegami--Saeki}. 
Concordance is a stronger equivalence relation than cobordism.


We mention the cobordism invariants used in the result by Ikegami~\cite{Ikegami} about the classification of Morse functions by cobordism. 
For a Morse function $f$ on a $n$-dimensional closed manifold and an integer $0\le \lambda\le n$, Let $\crit{\lambda}{f}$ be the number of the critical points of index $\lambda$ of $f$. 
The value $\phimap{\lambda}{f}\in \Z$ below (hence $\Phimap{f}\in \Z^{\floor{n/2}}$) is well defined \cite[Lemma 4.1]{Ikegami}, where $[f]$ denotes the cobordism class of $f$:
\begin{align}
    \phimap{\lambda}{f} 
    &= \crit{\lambda}{f} - \crit{n-\lambda}{f},
    \label{eq: definition of phi_lambda in preliminaries}\\
    \Phimap{f} 
    &= 
    \left(
    \phimap{\floor{(n+3)/2}}{f}, \phimap{\floor{(n+3)/2} + 1}{f}, \dots, \phimap{n}{f}
    \right). 
    \label{eq: definition of Phi in preliminaries}
\end{align}
We can use these invariants for concordance since concordance is a stronger equivalence relation than cobordism. 
For a Morse function $f$ on a closed manifold $M$ of dimension $n=2k+1$, we define $\sigma (f) \in \Z_2$ by 
\begin{equation}\label{eq: definition of sigma}
    \sigma(f) \coloneqq \sum_{\lambda=0}^{k} \crit{\lambda}{f} \pmod{2}. 
\end{equation}
By Lemma~\ref{lem: well-definedness of bar sigma}, we obtain another invariant $\bar\sigma : \concsp{M}\to\Z_2$ for concordance. 
Then, we can state the main result more precisely. 
\begin{thm}\label{thm: main result}
    Let $M$ be a closed connected $n$-dimensional manifold. 
    \begin{enumerate}
        \item 
        When $n$ is even, the map $\Phi: \concsp{M}\to \Z^{\floor{n/2}}$ is bijective. 

        \item 
        When $n$ is odd, the map $\Phi\oplus \bar\sigma: \concsp{M}\to \Z^{\floor{n/2}}\oplus\Z_2$ is bijective. 
    \end{enumerate}
\end{thm}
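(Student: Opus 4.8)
The plan is to establish bijectivity in three stages: observe that the map descends to $\concsp{M}$, prove surjectivity by an explicit construction, and prove injectivity by producing a concordance from a generic homotopy whose cusps are removed by Levine's method. Well-definedness is immediate: $\Phi$ is a cobordism invariant by \cite[Lemma~4.1]{Ikegami}, and concordance implies cobordism, while $\bar\sigma$ descends to $\concsp{M}$ by Lemma~\ref{lem: well-definedness of bar sigma}. Throughout I write $K=\floor{(n+3)/2}$ for the smallest index recorded by $\Phi$, and I use $\phimap{\lambda}{f}=\crit{\lambda}{f}-\crit{n-\lambda}{f}=-\varphi_{n-\lambda}([f])$, so that the content of $\Phi$ is carried by the indices $\lambda\ge K$.

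For surjectivity I would fix one reference Morse function on $M$ and modify it by births of cancelling pairs of critical points of adjacent indices, an operation that is always available inside a coordinate ball and hence on an arbitrary $M$. A birth of a pair of indices $(\lambda,\lambda+1)$ raises $\crit{\lambda}{f}$ and $\crit{\lambda+1}{f}$ by one each; computing its effect on the recorded coordinates of $\Phi$ and on $\sigma$ shows that the births of $(K-1,K),(K,K+1),\dots,(n-1,n)$ together with births of the conjugate low-index pairs realize every vector $\pm e_\lambda$ of $\Z^{\floor{n/2}}$, so $\Phi$ is onto; here the differences $\crit{\lambda}{f}-\crit{n-\lambda}{f}$ are unconstrained by $\chi(M)$. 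When $n=2k+1$ is odd, the birth of the middle pair $(k,k+1)$ leaves every recorded $\varphi_\lambda$ unchanged, since both $k$ and $k+1$ lie below $K$, yet flips $\sigma$; hence $\Phi\oplus\bar\sigma$ is onto as well.

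For injectivity, suppose $\Phimap{f_0}=\Phimap{f_1}$, and also $\sgn{f_0}=\sgn{f_1}$ when $n$ is odd. Since $C^\infty(M,\R)$ is convex there is a homotopy $f_t$ with $f_t=f_0$ near $t=0$ and $f_t=f_1$ near $t=1$; a generic perturbation rel ends makes $F(x,t)=(f_t(x),t)$ a generic map $M\times\ui\to\rt{\ui}$ that already satisfies the boundary conditions of a concordance, whose only failure to be a fold map is its finite set of cusps. It therefore suffices to remove all cusps by a homotopy supported in the interior, which I would do by Levine's elimination of matching pairs \cite{levine1965elimination}, available because $M\times\ui$ is connected and $\dim(M\times\ui)=n+1>2$; the cases $n\le 2$, where the hypotheses fail, I would treat by hand. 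Each cusp records a birth or death of a pair of critical points of adjacent indices $(\lambda,\lambda+1)$, so setting $\delta_\mu=\crit{\mu}{f_1}-\crit{\mu}{f_0}$ and letting $e_\lambda$ count births minus deaths of type $(\lambda,\lambda+1)$ gives the telescoping relation $\delta_\mu=e_{\mu-1}+e_\mu$ with $e_{-1}=e_n=0$, which pins the cusp data to the change in the Morse numbers and hence to $\Phi$ and $\sigma$.

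The decisive point, and the step I expect to be the genuine obstacle, is that the complete obstruction to eliminating every cusp is exactly the pair $(\Phi,\bar\sigma)$, so that equality of the invariants makes it vanish. Cusps are cancelled two at a time within a fixed absolute index, and one must show the residual count in each absolute index is a concordance invariant and identify it. For an absolute index $i>(n-1)/2$ the two conjugate types $(i,i+1)$ and $(n-1-i,n-i)$ occur (Fig.~\ref{fig: non minimum absolute index of cusp}); tracking them against the relations $\delta_\mu=\delta_{n-\mu}$, which follow from $\Phimap{f_0}=\Phimap{f_1}$ together with $\sum_\mu(-1)^\mu\delta_\mu=0$ (forced because $f_0,f_1$ share the manifold $M$), shows these cusps can be matched and removed. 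The self-conjugate middle absolute index $(n-1)/2$, present only for odd $n=2k+1$, is exceptional: its cusps are all of the single type $(k,k+1)$ (Fig.~\ref{fig: minimum absolute index of cusp}), they can be removed only in pairs, and the surviving residue equals $e_k\bmod 2\equiv\sum_{\lambda\le k}\delta_\lambda\equiv\sgn{f_1}-\sgn{f_0}\pmod 2$, which vanishes precisely when $\bar\sigma$ agrees. This is the reason a $\Z_2$ appears in odd dimensions but not in even ones, where the minimal absolute index $n/2$ is realized by a conjugate pair of types and contributes nothing new. Carrying out this identification rigorously, by matching Levine's topological cancellation hypotheses to the numerical balance and pinning the middle-dimensional residue to $\bar\sigma$, is where essentially all the work of the proof is concentrated.
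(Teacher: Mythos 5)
Your proposal is correct in outline and shares the paper's top-level architecture (realize invariants by creating pairs of critical points; take a generic map $M\times\ui\to\rt{\ui}$ with the prescribed boundary behaviour; eliminate all cusps of non-middle absolute index by Levine; identify the parity of the remaining middle-index cusps with $\sigma(f_0)-\sigma(f_1)$), but your computation of that parity is genuinely different from the paper's. You work with a concordance of the special Cerf form $F(x,t)=(f_t(x),t)$, so that cusps are exactly birth/death events of adjacent-index pairs, and you get the obstruction from the telescoping identity $\delta_\mu=e_{\mu-1}+e_\mu$, giving $e_k\equiv\sum_{\lambda\le k}\delta_\lambda\equiv\sigma(f_1)-\sigma(f_0)\pmod 2$; this is a clean and valid derivation, and it also yields $e_j=e_{n-1-j}$ as the exact translation of $\Phimap{f_0}=\Phimap{f_1}$. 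The paper instead perturbs an \emph{arbitrary} generic concordance so that $F_2=\operatorname{pr}_2\circ F$ is a Morse function, classifies the components of $S(F)$ and of $S(F)\setminus\{\text{cusps}\}$ into types, counts critical points of $F_2$ on each, and closes the count with the Euler-characteristic identity $\card{S(F_2)}\equiv\chi(M\times\ui)-\chi(M)=0\pmod 2$; this is longer but does not require the concordance to be a path of functions, and the same component analysis is what proves the well-definedness of $\bar\sigma$ (Lemma~\ref{lem: well-definedness of bar sigma}), which your route would still need separately. Two smaller points of divergence: the paper removes all cusps of absolute index exceeding $(n-1)/2$ by an unconditional appeal to Levine's \S 4, not by balancing conjugate types against $\delta_\mu=\delta_{n-\mu}$ as you suggest; and the step you explicitly defer (matching the numerical balance to Levine's cancellation hypotheses) is precisely where the paper's worked-out content lies, so as written your text is a viable plan whose decisive verification remains to be executed.
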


\medskip

\section{Proof of the main result}

We start by proving that $\bar\sigma$ is well defined. 

\begin{lem}\label{lem: well-definedness of bar sigma}
    Let $f_0$ and $f_1$ be two Morse functions on a closed manifold $M$ of dimension $n = 2k+1$. 
    Then $\sigma (f_0) = \sigma (f_1)$ holds.  
\end{lem}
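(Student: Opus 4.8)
The plan is to reduce the statement to a parity count by interpolating between $f_0$ and $f_1$ through a sweep-out by Morse functions. Take a concordance, that is, a fold map $F = (F_1, F_2) : M\times\ui \to \rt{\ui}$ with $F = f_0\times\id$ near $M\times\{0\}$ and $F = f_1\times\id$ near $M\times\{1\}$, where $F_1$ denotes the $\R$-coordinate and $F_2$ the $\ui$-coordinate. After a small perturbation of $F$ in the interior — preserving the fold condition (an open condition) and fixing $F$ near the boundary — I may assume $F_2 : M\times\ui\to\ui$ is a Morse function with pairwise distinct critical values, all interior. For a regular value $t$ of $F_2$ the level set $M_t := F_2^{-1}(t)$ is a closed $n$-manifold and $g_t := F_1|_{M_t} : M_t\to\R$ is a Morse function whose critical points are exactly the points of $S(F)\cap M_t$. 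Since $g_t = f_0$ for $t$ near $0$ and $g_t = f_1$ for $t$ near $1$, the element $\sigma(g_t)\in\Z_2$ interpolates between $\sigma(f_0)$ and $\sigma(f_1)$.

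Next I would show that $\sigma(g_t)$ is locally constant away from the critical values of $F_2$ and changes by exactly $1$ across each of them. Every critical point of $F_2$ lies on $S(F)$: where $dF_2 = 0$ the differential $dF$ has rank at most $1$, so the point is singular, and since $F$ is a fold map it is a fold point. Conversely, critical points of $g_t$ can be born or die only as $t$ crosses such a value. At a critical point $q$ of $F_2$ the function $F_1$ is a submersion, so $F_1$ may be taken as one coordinate while $F_2$ is a Morse function of the remaining $n$ coordinates; the resulting normal form shows that exactly two critical points of $g_t$ are created (or destroyed) as $t$ passes $F_2(q)$, and that their Morse indices sum to $n$. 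As $n = 2k+1$ is odd, exactly one of these two indices is $\le k$, so $\sum_{\lambda=0}^k\crit{\lambda}{g_t}$ changes by $\pm 1$ and $\sigma(g_t)$ changes by $1$ in $\Z_2$.

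It then follows that $\sigma(f_1) - \sigma(f_0) \equiv \#\{\text{critical points of } F_2\}\pmod 2$. To evaluate this parity I would apply Morse theory to $F_2$ viewed as a Morse function on the cobordism $M\times\ui$ with $F_2^{-1}(0) = M\times\{0\}$ and $F_2^{-1}(1) = M\times\{1\}$: the number of critical points is congruent modulo $2$ to $\sum_q (-1)^{\mathrm{ind}(q)} = \chi(M\times\ui) - \chi(M\times\{0\}) = \chi(M) - \chi(M) = 0$. Hence $\sigma(f_0) = \sigma(f_1)$.

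The crux is the local model at a critical point of $F_2$: one must verify that, for a generic fold map, such a point is a nondegenerate critical point of $F_2$ sitting on a fold arc, that precisely two critical points of $g_t$ are created there, and that their indices sum to $n$ — it is here that the oddness of $n$ is used in an essential way. The supporting genericity claim, that $F$ can be perturbed within fold maps rel boundary so as to make $F_2$ Morse with distinct critical values, is a routine transversality argument but should be recorded carefully, as should the verification that $g_t$ remains Morse, with constant index data, on each interval between consecutive critical values.
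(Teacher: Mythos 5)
Your argument is correct, and it rests on the same two pillars as the paper's proof: a perturbation making $F_2=\operatorname{pr}_2\circ F$ a Morse function, and the parity identity $|S(F_2)|\equiv\chi(M\times[0,1])-\chi(M\times\{0\})=0\pmod 2$. The difference is in the intermediate bookkeeping. The paper stays global: it classifies the components of the $1$-manifold $S(F)$ into four types according to where their endpoints lie, records for each type the parity of the number of critical points of $F_2$ on it together with the index relation at the endpoints (complementary indices $\lambda$ and $2k+1-\lambda$ when both ends lie on the same boundary function), and sums over components to get $\sigma(f_0)+\sigma(f_1)\equiv|S(F_2)|$. You instead sweep out by the level sets $M_t=F_2^{-1}(t)$ and track the bifurcation of $\sigma(g_t)$ across each critical value of $F_2$. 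Your local computation at a critical point $q$ of $F_2$ --- two critical points of $g_t$ born or dying with indices $\lambda$ and $n-\lambda$, exactly one of which is $\le k$ because $n$ is odd --- is precisely the infinitesimal version of the facts the paper asserts about its arcs (odd number of $F_2$-critical points on a type-(1) arc, complementary endpoint indices), and it does go through: writing $F$ near $q$ in fold coordinates $(u,x)\mapsto(u,Q(x))$ and expressing $\operatorname{pr}_1,\operatorname{pr}_2$ in that target chart, the two newborn critical points of $g_t$ acquire Hessians which are respectively a positive and a negative multiple of $\operatorname{Hess}Q$, giving indices $\lambda$ and $n-\lambda$; nondegeneracy of $g_t$ away from the critical values of $F_2$ falls out of the same computation. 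So your route trades the paper's component-by-component case analysis for a one-parameter family argument; it makes the role of the oddness of $n$ more transparent and actually supplies the local justification that the paper leaves implicit, at the cost of having to set up the genericity of the sweep-out (distinct critical values, Morseness of each $g_t$) a little more carefully.
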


\begin{proof}
    Take a concordance $F: M\times\ui\to\R\times\ui$ between $f_0$ and $f_1$. 
    Perturbing $F$, we may assume that $F_2 \coloneqq \operatorname{pr}_2\circ F: M\times\ui\to\ui$ is a Morse function, where $\operatorname{pr}_2: \R\times\ui\to\ui$ is the standard projection to the second component.  
    Since each component $c$ of the 1-dimensional manifold $S(F)$ is an arc or a circle, $c$ is one of the four types below (see Fig.~\ref{fig: four types of components of S(F)}). 
    \begin{enumerate}
        \item 
        The component $c$ is an arc connecting two critical points $p_1$ and $p_2$ of $f_0$. 
        Then $F_2$ has an odd number of critical points on $c$. 
        In addition, the indices of $p_1$ and $p_2$ as critical points of $f_0$ are $\lambda$ and $2k+1-\lambda$, respectively. 

        \item 
        The component $c$ is an arc connecting two critical points $q_1$ and $q_2$ of $f_1$. 
        Then $F_2$ has an odd number of critical points on $c$. 
        In addition, the indices of $q_1$ and $q_2$ as critical points of $f_1$ are $\lambda$ and $2k+1-\lambda$, respectively. 

        \item 
        The component $c$ is an arc connecting two critical points $p_3$ of $f_0$ and $q_3$ of $f_1$. 
        Then $F_2$ has an even number of critical points on $c$. 
        In addition, the indices of $p_3$ and $q_3$ are the same as a critical point of $f_0$ and $f_1$ respectively. 
        Let $i(c)$ denote the index. 

        \item 
        The component $c$ is a circle. 
        Then $F_2$ has an even number of critical points on $c$. 
    \end{enumerate}

    \begin{figure}[b]
        \centering
        \includegraphics[width=0.5\columnwidth]{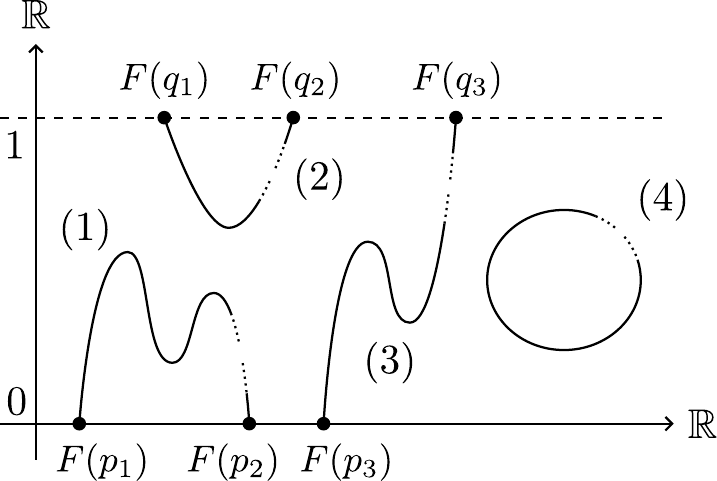}
        \caption{The image by $F$ of four types of $c$}
        \label{fig: four types of components of S(F)}
    \end{figure}

    Since $2k+1-\lambda\ge k+1 > k$ for $0\le \lambda\le k$, the sum $\sum_{\lambda =0}^k\crit{\lambda}{f_0}$ is equal to the number of the components of type (1) and type (3) with $i(c) \le k$. 
    Hence we can calculate $\sigma (f_0)$ in the following way. 
    \begin{align}
        \sigma (f_0)
        &\equiv
        \sum_{\lambda =0}^k \crit{\lambda}{f_0} \pmod{2}\\
        &=
        \card{\{c\mid \text{$c$ is of type (1)}\}} + \card{\{c\mid \text{$c$ is of type (3), }i(c)\le k\}}
        \label{eq: calculation of sigma (f_0)}\\
        &\equiv
        \sum_{\text{$c$ is of type (1)}} \card{ S(F_2)\cap c} + \card{\{c\mid \text{$c$ is of type (3), }i(c)\le k\}}\pmod{2}
    \end{align}
    We can also calculate $\sigma (f_1)$ in a similar way. 
    \begin{align}
        \sigma (f_1) 
        &\equiv
        \card{\{c\mid \text{$c$ is of type (2)}\}} + \card{\{c\mid \text{$c$ is of type (3), }i(c)\le k\}} \pmod{2}
        \label{eq: calculation of sigma (f_1)}\\
        &\equiv
        \sum_{\text{$c$ : type (2)}} \card{ S(F_2)\cap c} + \card{\{c\mid \text{$c$ : type (3), }i(c)\le k\}}\pmod{2}. 
    \end{align}
    In addition, Since $F_2$  has an even number of critical points on the type (3) and (4) components, we obtain
    \begin{align}
        \sigma (f_0) - \sigma (f_1)
        &=
        \sigma (f_0) + \sigma (f_1) \\
        &\equiv
        \sum_{\text{$c$ : type (1)}} \card{ (S(F_2)\cap c} + \sum_{\text{$c$ : type (2)}} \card{S(F_2)\cap c} \pmod{2} \\
        &\equiv
        \card{S(F_2)} \pmod{2}. 
    \end{align}
    Therefore, what we have to prove is 
    \begin{equation}\label{eq: |S(F_2)| is even for any Morse function F_2}
    \card{S(F_2)} \equiv 0 \pmod{2}. 
    \end{equation}
    Since $F_2$ is a Morse function, we have $\chi (M\times \ui, M\times \{0\}) = \sum_{\lambda =0}^{2k+2} (-1)^\lambda\crit{\lambda}{F_2}$. 
    Hence, 
    \begin{align}
        \card{S(F_2)}
        &=
        \sum_{\lambda =0}^{2k+2} \crit{\lambda}{F_2} \\
        &\equiv
        \sum_{\lambda =0}^{2k+2} (-1)^\lambda\crit{\lambda}{F_2} \pmod{2} \\
        &=
        \chi (M\times \ui, M\times \{0\}) \\
        &=
        \chi (M\times \ui) - \chi (M) \\
        &=
        0.
    \end{align}
    We have proven \eqref{eq: |S(F_2)| is even for any Morse function F_2} and it means $\sigma (f_0) = \sigma (f_1) = 0$.
\end{proof}

We prove the main theorem for $n\ge 2$. 
We can prove it for $n=1$ in a different but more direct way. 

\begin{proof}[Proof of Theorem~\ref{thm: main result}(1)]

Since we can prove $\Phi$ is surjective in the same way in \cite{Ikegami}, we consider only the injectivity. 

Let $f_0$ and $f_1$ be two Morse functions on $M$. 
Suppose that they satisfy $\Phimap{f_0} = \Phimap{f_1}$. 
First, take a generic map $F : M\times\ui\to\R\times\ui$ satisfying $F|_{M_0\times [0,\varepsilon)} = f_0\times \id_{[0, \varepsilon)}$ and $F|_{M_1\times (1-\varepsilon, 1]} = f_1\times \id_{(1-\varepsilon, 1]}$ for some $0<\varepsilon<1/2$. 
If $F$ has no cusp point, $F$ is a fold map and it means that $f_0$ and $f_1$ are concordant. 
Below we assume that $F$ has a cusp point. 

Let $\mu$ be the maximum value of the absolute indices of fold points adjacent to a cusp point. 
If $\mu-1 > n-\mu$ holds, we can eliminate all the cusp points of absolute index $\mu-1$ \cite[\S 4]{levine1965elimination}. 
Repeating this procedure, we obtain a generic map $F$ which has no cusp point of index greater than $(n-1)/2$. 
Since each absolute index of a cusp point is greater than or equal to $n/2$ when $n$ is even, $F$ has no cusp when $n$ is even. Therefore $F$ is a concordance between $f_0$ and $f_1$ and we have completed the proof. 

\end{proof}

\begin{proof}[Proof of Theorem~\ref{thm: main result}(2)]

First, we prove that $\Phi\oplus \bar\sigma$ is surjective. 

Let $n=2k+1$. 
Suppose that an arbitrary element $(a_{k+2}, a_{k+3}\dots, a_{2k+1}, \ell)$ in $\Z^k\oplus \Z_2$ is given. 
There is a Morse function $h_0$ on $M$ such that $\Phimap{h_0} = (a_{k+2}, a_{k+3}\dots, a_{2k+1})$ as well as $n$ is even. 
Since we can adopt $h_0$ if $\sigma (h_0) = \ell$, consider $\sigma (h_0)\neq \ell$. 
In this case, we create a pair of critical points of indices $k$ and $k+1$. 
Then the resulting Morse function $h$ satisfies the following. 
\begin{align}
    \Phimap{h} &= \Phimap{h_0} = (a_{k+2}, a_{k+3}\dots, a_{2k+1}), \\
    \sigma (h) 
    &\equiv
    \sum_{\lambda = 0}^k \crit{\lambda}{h} \pmod{2}\\
    &=
    \sum_{\lambda = 0}^{k-1} \crit{\lambda}{h_0} + (\crit{k}{h_0} + 1) \\
    &\equiv
    \sigma (h_0) +1 \pmod{2}\\
    &\equiv
    \ell \pmod{2}.
\end{align}
This completes the proof of surjective part. 

Next, we check the injectivity. 

Given two Morse functions $f_0$ and $f_1$ on $M$ satisfying $\Phimap{f_0} = \Phimap{f_1}$ and $\sigma (f_0) = \sigma (f_1)$, we need to prove that $f_0$ and $f_1$ are concordant. 
We can take a generic map $F: M\times\ui\to\R\times\ui$ which satisfies $F|_{M\times [0,\varepsilon)} = f_0\times \id_{[0, \varepsilon)}$ and $F|_{M\times (1-\varepsilon, 1]} = f_1\times \id_{(1-\varepsilon, 1]}$ for some $0<\varepsilon<1/2$ and has no cusp point of absolute index greater than $(n-1)/2 = k$. 
It implies that every cusp point of $F$ has the absolute index $k$. 
Since every pair of cusp points of absolute index $k$ is a matching pair (\cite{levine1965elimination}), we can eliminate all the cusp points when $F$ has an even number of cusps. 
In the following, we prove that $F$ has an even number of cusps under the assumption that $\sigma (f_0) = \sigma (f_1)$. 
By perturbing $F$ after composing a self-diffeomorphism of $\R\times\ui$, we may assume that $F_2: M\times\ui\to\ui$ is a Morse function and the image of a neighborhood of the cusp points in $S(F)$ is as shown in Fig.~\ref{fig: image of neighborhood of the cusps of F}. 
These two conditions are compatible since each cusp point is always a regular point of $F_2$ whenever the latter condition holds. 
Then, each component $c'$ of $S(F) \setminus \{\text{the cusp points}\}$ is one of the seven types below (see Fig.~\ref{fig: seven types of components of S(F) setminus cusps}).
\begin{enumerate}[(1)$'$]
    \item 
    The component $c'$ is an arc connecting two critical points $p_1$ and $p_2$ of $f_0$.  
    Then $F_2$ has an odd number of critical points on $c'$. 
    In addition, the indices of $p_1$ and $p_2$ as critical points of $f_0$ are $\lambda$ and $2k+1 - \lambda$, respectively. 

    \item 
    The component $c'$ is an arc connecting two critical points $q_1$ and $q_2$ of $f_1$.  
    Then $F_2$ has an odd number of critical points on $c'$. 
    In addition, the indices of $q_1$ and $q_2$ as critical points of $f_1$ are $\lambda$ and $2k+1 - \lambda$, respectively. 
    
    \item
    The component $c'$ is an arc connecting two critical points $p_3$ of $f_0$ and $q_3$ of $f_1$. 
    Then $F_2$ has an even number of critical points on $c'$. 
    In addition, the indices of $p_3$ and $q_3$ are the same as a critical point of $f_0$ and $f_1$ respectively. 
    
    \item
    The component $c'$ is a circle. 
    Then $F_2$ has an even number of critical points on $c'$. 

    \item 
    The component $c'$ is an arc connecting a critical point $p_4$ of $f_0$ and a cusp point $x_1$ of $F$. 
    Then $F_2$ has an even number of critical points on $c'$. 

    \item 
    The component $c'$ is an arc connecting two cusp points $x_1$ and $x_2$ of $F$. 
    Then $F_2$ has an odd number of critical points on $c'$. 

    \item 
    The component $c'$ is an arc connecting a critical point $q_4$ of $f_1$ and a cusp point $x_2$ of $F$. 
    Then $F_2$ has an odd number of critical points on $c'$. 
\end{enumerate}
In summary, the parity of the number of connected components of $S(F)\cap c'$ is the following. 
\begin{equation}\label{eq: the number of critical points of F_2 on c'}
    \card{S(F_2)\cap c'}
    \equiv
    \begin{cases}
    1 \pmod{2} & (\text{$c'$ is of type (1)$'$}) \\
    1 \pmod{2} & (\text{$c'$ is of type (2)$'$}) \\
    0 \pmod{2} & (\text{$c'$ is of type (3)$'$}) \\
    0 \pmod{2} & (\text{$c'$ is of type (4)$'$}) \\
    0 \pmod{2} & (\text{$c'$ is of type (5)$'$}) \\
    1 \pmod{2} & (\text{$c'$ is of type (6)$'$}) \\
    1 \pmod{2} & (\text{$c'$ is of type (7)$'$}) 
    \end{cases}. 
\end{equation}

\begin{figure}[btp]
    \centering
    \includegraphics[width = 0.45\columnwidth]{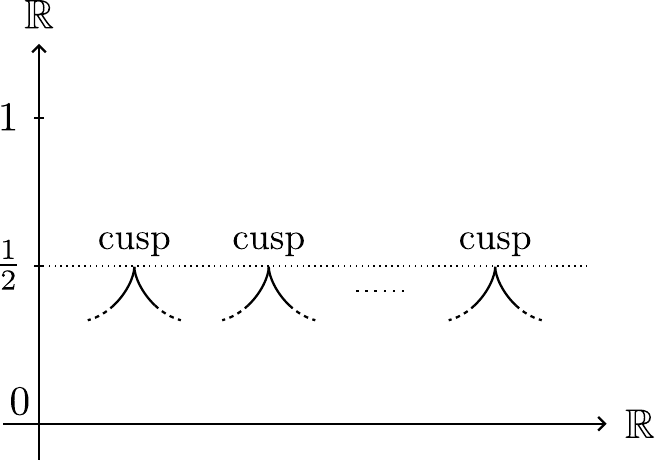}
    \caption{The image by $F$ of a neighborhood of the cusp points in $S(F)$}
    \label{fig: image of neighborhood of the cusps of F}
\end{figure}

\begin{figure}[btp]
    \centering
    \includegraphics[width = 0.8\columnwidth]{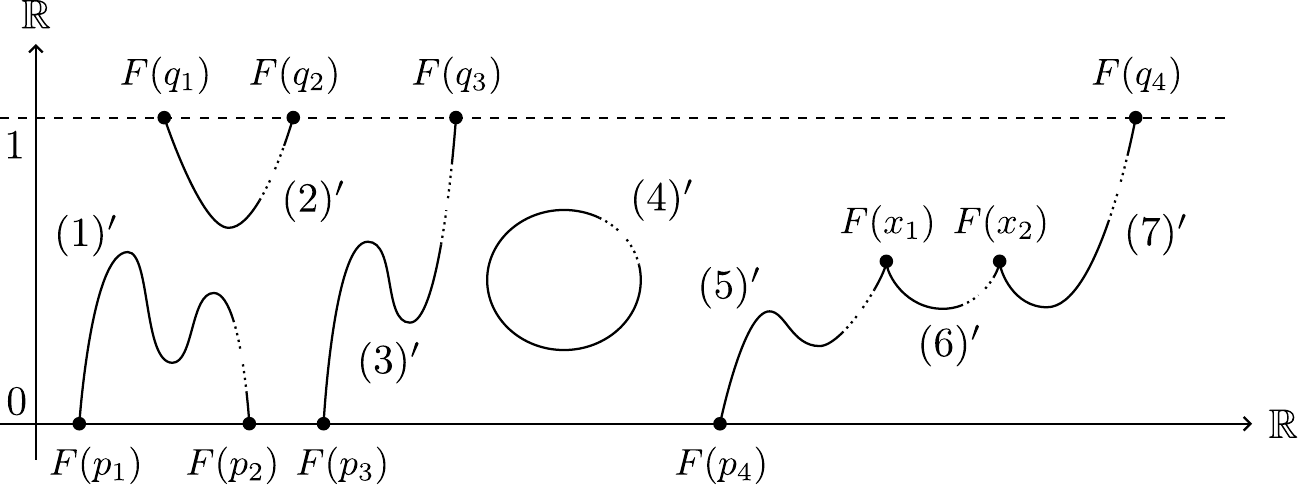}
    \caption{The image by $F$ of seven types of $c'$}
    \label{fig: seven types of components of S(F) setminus cusps}
\end{figure}

Let $c$ be a component of $S(F)$. 
we calculate the number of the cusp points of $F$ on $c$ modulo 2, that we call $l(c)$. 
The component $c$ is one of the four types below, as in the proof of Lemma~\ref{lem: well-definedness of bar sigma}. 
\begin{enumerate}
    \item 
    The component $c$ is an arc connecting two critical points $p_1$ and $p_2$ of $f_0$. 
    Then $F_2$ has an odd number of critical points on $c$. 
    In addition, the indices of $p_1$ and $p_2$ as critical points of $f_0$ are $\lambda$ and $2k+1-\lambda$, respectively. 

    \item 
    The component $c$ is an arc connecting two critical points $q_1$ and $q_2$ of $f_1$. 
    Then $F_2$ has an odd number of critical points on $c$. 
    In addition, the indices of $q_1$ and $q_2$ as critical points of $f_1$ are $\lambda$ and $2k+1-\lambda$, respectively. 

    \item 
    The component $c$ is an arc connecting two critical points $p_3$ of $f_0$ and $q_3$ of $f_1$. 
    Then $F_2$ has an even number of critical points on $c$. 
    In addition, the indices of $p_3$ and $q_3$ are the same as a critical point of $f_0$ and $f_1$ respectively. 
    Let $i(c)$ denote the index. 

    \item 
    The component $c$ is a circle. 
    Then $F_2$ has an even number of critical points on $c$. 
\end{enumerate}
We calculate the parity of $l(c)$ for each of $c$ types 1 to 4. For every case, $c$ is a union of $c'$'s. 
\begin{enumerate}
    \item 
    When $l(c)>0$, $c$ is the union of two $c'$ of type (5)$'$ and $(l(c)-1)$ $c'$ of type (6)$'$. 
    Hence, by using \ref{eq: the number of critical points of F_2 on c'} we obtain
    \begin{align}
        \card{S(F_2)\cap c} \equiv 2\cdot 0 + (l(c)-1)\cdot 1 \equiv l(c)+1 \pmod{2}. 
    \end{align}
    It also holds when $l(c)=0$. 

    \item 
    When $l(c)>0$, we obtain 
    \begin{align}
        \card{S(F_2)\cap c} \equiv 2\cdot 1 + (l(c)-1)\cdot 1 \equiv l(c)+1 \pmod{2}. 
    \end{align}
    It also holds when $l(c)=0$. 

    \item 
    When $l(c)>0$, we obtain 
    \begin{align}
        \card{S(F_2)\cap c} \equiv 1\cdot 0 + (l(c)-1)\cdot 1\cdot 1 \equiv l(c) \pmod{2}. 
    \end{align}
    It also holds when $l(c)=0$. 

    \item 
    When $l(c)>0$, we obtain 
    \begin{align}
        \card{S(F_2)\cap c} \equiv l(c)\cdot 1 \equiv l(c) \pmod{2}. 
    \end{align}
    It also holds when $l(c)=0$. 
\end{enumerate}
From the above, we can calculate the number of the cusps of $F$. 
\begin{align}
    \sum_{\text{$c$ : component of $S(F)$}} l(c) &=
    \sum_{\text{$c$ : type(1)}} l(c)
    + \sum_{\text{$c$ : type(2)}} l(c)
    + \sum_{\text{$c$ : type(3)}} l(c)
    + \sum_{\text{$c$ : type(4)}} l(c) \\
    &\equiv
    \sum_{\text{$c$ : type(1)}} (\card{S(F_2)\cap c} + 1)
    + \sum_{\text{$c$ : type(2)}} (\card{S(F_2)\cap c} + 1) \\
    &~~~~~
    + \sum_{\text{$c$ : type(3)}} \card{S(F_2)\cap c}
    + \sum_{\text{$c$ : type(4)}} \card{S(F_2)\cap c} \pmod{2}\\
    &=
    \card{S(F_2)} + \card{\{c\mid \text{$c$ is of type (1)}\}} + \card{\{c\mid \text{$c$ is of type (2)}\}} \\
    &\equiv
    \card{\{c\mid \text{$c$ is of type (1)}\}} + \card{\{c\mid \text{$c$ is of type (2)}\}} \pmod{2}
    \label{eq: parity of the number of cusps of F}, 
\end{align}
where we applied \eqref{eq: |S(F_2)| is even for any Morse function F_2}. 
On the other hand, since \eqref{eq: calculation of sigma (f_0)} and \eqref{eq: calculation of sigma (f_1)} also hold when $F$ has cusp points, we obtain
\begin{align}
    0 
    &=
    \sigma (f_0) - \sigma (f_1) \\
    &=
    \sigma (f_0) + \sigma (f_1) \\
    &\equiv
    \card{\{c\mid \text{$c$ is of type (1)}\}} + \card{\{c\mid \text{$c$ is of type (2)}\}} \pmod{2}. \label{eq: the number of c of type (1) or type (3)} 
\end{align}
From \eqref{eq: parity of the number of cusps of F} and \eqref{eq: the number of c of type (1) or type (3)}, we see that $F$ has an even number of cusps. 
Hence, we can eliminate all the cusps of $F$ and the resulting generic map is a fold map. 
It implies that $f_0$ and $f_1$ are concordant and we complete the proof. 
\end{proof}

Theorem~\ref{thm: main result} determines $\concsp{M}$. 
We compare the theorem with those for cobordism. 
The cobordism group of Morse functions was determined in Theorem~2.7, 2.8, and 2.9 in Ikegami~\cite{Ikegami}.
Comparing Theorem~\ref{thm: main result} with these results, we obtain the following corollaries, which show that the concordance is truly a stronger equivalence relation than the cobordism in some dimensions. 
\begin{cor}
    Let $M$ be a closed connected $n$-dimensional manifold. 
    \begin{enumerate}
        \item 
        When $n$ is even, any two unoriented cobordant Morse functions $f_0$ and $f_1$ on $M$ are concordant. 
        \item 
        When $n$ is odd, there is a pair of Morse functions $f_0$ and $f_1$ on $M$ such that they are unoriented cobordant but not concordant. 
    \end{enumerate}
\end{cor}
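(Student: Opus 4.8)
The plan is to read both parts off the comparison between Theorem~\ref{thm: main result} of this paper and Ikegami's cobordism classification in~\cite{Ikegami}, exploiting that $\Phi$ is an invariant shared by both equivalence relations while $\bar\sigma$ detects concordance only. Recall that $\Phi$ is well defined on cobordism classes by~\cite[Lemma 4.1]{Ikegami}, so any two unoriented cobordant Morse functions $f_0$ and $f_1$ on $M$ automatically satisfy $\Phimap{f_0} = \Phimap{f_1}$.

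For (1), suppose $n$ is even and $f_0$, $f_1$ are unoriented cobordant. Then $\Phimap{f_0} = \Phimap{f_1}$ by the remark above, and Theorem~\ref{thm: main result}(1) asserts that $\Phi\colon \concsp{M}\to\Z^{\floor{n/2}}$ is injective. Hence $f_0$ and $f_1$ represent the same class in $\concsp{M}$, i.e. they are concordant. This step is essentially formal once $\Phi$ is known to be a cobordism invariant.

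For (2), let $n = 2k+1$ and produce an explicit pair. Starting from any Morse function $f_0$ on $M$, I would create a canceling pair of critical points of indices $k$ and $k+1$ by a local modification, obtaining a second Morse function $f_1$ on the same $M$; this is exactly the construction used in the surjectivity part of the proof of Theorem~\ref{thm: main result}(2). Because the created indices $k$ and $k+1$ are both smaller than $\floor{(n+3)/2} = k+2$, while $n-\lambda$ ranges only over $\{0,\dots,k-1\}$ as $\lambda$ runs over $k+2\le\lambda\le 2k+1$, none of the differences $\crit{\lambda}{f}-\crit{n-\lambda}{f}$ entering $\Phi$ is affected, so $\Phimap{f_0}=\Phimap{f_1}$; on the other hand the same computation as in that proof gives $\sigma(f_1)\equiv\sigma(f_0)+1\pmod 2$, hence $\sgn{f_0}\neq\sgn{f_1}$. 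By the injectivity of $\Phi\oplus\bar\sigma$ in Theorem~\ref{thm: main result}(2), the functions $f_0$ and $f_1$ are \emph{not} concordant. It then remains to check that they \emph{are} unoriented cobordant, which I would obtain from Ikegami's classification (Theorems 2.7--2.9 of~\cite{Ikegami}): for odd $n$ the unoriented cobordism class is detected by $\Phi$ alone, so $\Phimap{f_0}=\Phimap{f_1}$ forces $f_0$ and $f_1$ to be cobordant.

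The main obstacle is this last step: certifying that $f_0$ and $f_1$ really are cobordant even though $\bar\sigma$ separates them, i.e. confirming that no $\bar\sigma$-type invariant survives for unoriented cobordism in odd dimensions. I expect to settle it in one of two ways. Either by matching the $\Z^{\floor{n/2}}$ appearing in the relevant theorem of~\cite{Ikegami} with $\Phi$ to conclude that $\Phi$ is a complete unoriented cobordism invariant; or, more geometrically, by taking the birth homotopy $F\colon M\times\ui\to\R\times\ui$ from $f_0$ to $f_1$, which is a generic map carrying a single cusp of absolute index $k$, and removing that cusp by a surgery supported in a small neighborhood of it. Such a surgery replaces the cylinder $M\times\ui$ by some other cobordism $X$ between $M$ and itself, leaves the boundary and the restrictions $f_0$, $f_1$ unchanged, and turns $F$ into a genuine fold map $X\to\R\times\ui$. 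This is precisely the extra freedom that cobordism enjoys over concordance, and is exactly what makes $\bar\sigma$ fail to be a cobordism invariant. Verifying that this cusp-removing surgery can be carried out with the stated properties is the one genuinely nontrivial point of the argument.
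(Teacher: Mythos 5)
Your argument is exactly the paper's (implicit) proof: the paper offers no details beyond ``comparing Theorem~\ref{thm: main result} with Ikegami's Theorems 2.7--2.9,'' and your proposal simply makes that comparison explicit --- part (1) from the cobordism-invariance of $\Phi$ plus injectivity of $\Phi$ on $\concsp{M}$ for even $n$, part (2) from a birth of an index-$(k,k+1)$ pair, which preserves $\Phi$ (hence, by Ikegami's unoriented classification, the cobordism class for functions on a fixed $M$) but flips $\sigma$. The cusp-removal surgery you sketch as a fallback is not needed once you invoke Ikegami's theorem, which is the route the paper intends.
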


\begin{cor}
    Let $M$ be a closed connected $n$-dimensional manifold. 
    \begin{enumerate}
        \item 
        When $n\not\equiv 3 \pmod{4}$, any two oriented cobordant Morse functions $f_0$ and $f_1$ on $M$ are concordant. 
        \item 
        When $n\equiv 3 \pmod{4}$, there is a pair of Morse functions $f_0$ and $f_1$ on $M$ such that they are oriented cobordant but not concordant. 
    \end{enumerate}
\end{cor}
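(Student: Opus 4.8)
The plan is to reduce both statements to a single computation: how the mod-two invariant $\sigma$ changes along an oriented fold-map cobordism. Since $\Phi$ is a cobordism invariant (\cite[Lemma 4.1]{Ikegami}), two oriented cobordant Morse functions on $M$ automatically share $\Phi$, so by Theorem~\ref{thm: main result} the only possible discrepancy is in $\sigma$, and only when $n$ is odd. Repeating the four-type count from the proof of Lemma~\ref{lem: well-definedness of bar sigma}, but with the cylinder $M\times\ui$ replaced by an arbitrary oriented cobordism $X$ from $M$ to $M$ carrying a fold map $F$, the analysis is unchanged, since it uses only the boundary data $f_0,f_1$; perturbing $F_2=\operatorname{pr}_2\circ F$ to a Morse function it yields $\sigma(f_0)+\sigma(f_1)\equiv\card{S(F_2)}\equiv\chi(X,M)\equiv\chi(X)\pmod 2$, where I used $\chi(M)=0$ because $n$ is odd. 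Thus everything hinges on the parity of $\chi(X)$.

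For part (1), when $n$ is even there is no $\sigma$, and the invariance of $\Phi$ together with Theorem~\ref{thm: main result}(1) gives concordance at once. When $n\equiv 1\pmod 4$ I would close $X$ up to a closed oriented $(n+1)$-manifold $\hat X$ by gluing its two boundary copies of $M$ via the identity, so that $\chi(\hat X)=\chi(X)-\chi(M)=\chi(X)$. Since $n+1\equiv 2\pmod 4$, the middle cup-product pairing on $H^{(n+1)/2}(\hat X;\R)$ is skew-symmetric and nondegenerate, so that cohomology has even dimension, and Poincar\'e duality forces $\chi(\hat X)\equiv 0\pmod 2$. Hence $\sigma(f_0)=\sigma(f_1)$, and Theorem~\ref{thm: main result}(2) shows $f_0$ and $f_1$ are concordant.

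For part (2), with $n\equiv 3\pmod 4$ write $n+1=4j$. Starting from any Morse function $f_0$, I would let $f_1$ be obtained by the birth of a canceling pair of indices $k$ and $k+1$, exactly as in the surjectivity argument for Theorem~\ref{thm: main result}(2); then $\Phimap{f_1}=\Phimap{f_0}$ while $\sigma(f_1)=\sigma(f_0)+1$, so $f_0$ and $f_1$ are not concordant. To see they are oriented cobordant I would take $X=(M\times\ui)\,\#\,\mathbb{CP}^{2j}$ (interior connected sum): this is an oriented cobordism with $\partial X=M\sqcup(-M)$ and $\chi(X)=\chi(\mathbb{CP}^{2j})-2=2j-1$, which is odd. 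Choosing a generic $F\colon X\to\rt{\ui}$ extending $f_0\times\id$ and $f_1\times\id$ and applying Levine's index reduction \cite{levine1965elimination} exactly as in the proof of Theorem~\ref{thm: main result}(2) (which uses $\Phimap{f_0}=\Phimap{f_1}$), I remove every cusp of absolute index greater than $k$. The number of remaining index-$k$ cusps is, by the same bookkeeping, congruent to $\chi(X)+\sigma(f_0)+\sigma(f_1)\equiv 1+1\equiv 0\pmod 2$; since any two index-$k$ cusps form a matching pair, Levine's elimination turns $F$ into a fold map, which is the desired oriented cobordism.

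The step I expect to be the main obstacle is this last one: upgrading a cobordism of the correct Euler characteristic to a genuine oriented fold-map cobordism, i.e.\ verifying that the cusp-parity identity and the matching-pair elimination of \cite{levine1965elimination} really do go through verbatim on $X$ rather than on $M\times\ui$. The conceptual point that singles out $n\equiv 3\pmod 4$ is that a closed oriented $(n+1)$-manifold can have odd Euler characteristic precisely when $n+1\equiv 0\pmod 4$; in every other dimension the argument of part (1) forces $\chi(X)$ to be even, hence $\sigma$ to be preserved, whereas here the filling $\mathbb{CP}^{2j}$ supplies the odd contribution that flips $\sigma$.
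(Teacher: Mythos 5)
Your proposal is correct, but it proves the corollary by a genuinely different route than the paper. The paper offers no self-contained argument here: it simply invokes Ikegami's Theorems 2.7--2.9 (the full computation of the oriented cobordism group of Morse functions) and compares the complete cobordism invariants there with the concordance invariants of Theorem~\ref{thm: main result}. You instead reduce everything to the parity of $\chi(X)$ for the oriented cobordism $X$ carrying the fold map: rerunning the four-type count of Lemma~\ref{lem: well-definedness of bar sigma} on a general $X$ to get $\sigma(f_0)+\sigma(f_1)\equiv\chi(X)\pmod 2$ is legitimate, since that argument only uses that $S(F)$ is a compact $1$-manifold meeting $\partial X$ in the critical points of $f_0$ and $f_1$, together with $\card{S(F_2)}\equiv\chi(X,M)\pmod 2$; and closing $X$ up and citing the skew-symmetry of the middle intersection form correctly shows $\chi(X)$ is even when $n+1\equiv 2\pmod 4$, which is exactly where orientability enters. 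For part (2), the connected sum with $\mathbb{CP}^{2j}$ and the cusp-count $\equiv\chi(X)+\sigma(f_0)+\sigma(f_1)\pmod 2$ is the bookkeeping from the proof of Theorem~\ref{thm: main result}(2) with the single correction term $\card{S(F_2)}\equiv\chi(X)$ instead of $0$; Levine's index reduction and the matching-pair elimination apply verbatim since $X$ is connected of dimension $n+1\ge 4$ and the modifications are supported away from the collars. What your approach buys is independence from Ikegami's group computation, an explicit cobordism rather than an existence statement, and a transparent explanation of the mod-$4$ dichotomy (closed oriented manifolds of dimension $\equiv 2\pmod 4$ have even Euler characteristic, while $\mathbb{CP}^{2j}$ supplies an odd one in dimensions $\equiv 0\pmod 4$); as a by-product it shows $\bar\sigma$ descends to an oriented cobordism invariant when $n\equiv 1\pmod 4$. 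The only caveats are cosmetic and shared with the paper's statement: $M$ must be orientable for ``oriented cobordant'' to be meaningful, and the case $n=1$ of part (1) relies on the separately-proved $n=1$ case of Theorem~\ref{thm: main result}.
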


\medskip

\end{document}